\newtheorem{theorem}{Theorem}
\newtheorem{lemma}[theorem]{Lemma}
\newtheorem{corollary}[theorem]{Corollary}
\theoremstyle{definition}
\newtheorem{remark}{Remark}
\DeclareFontFamily{U}{mathx}{\hyphenchar\font45}
\DeclareFontShape{U}{mathx}{m}{n}{
<5> <6> <7> <8> <9> <10>
<10.95> <12> <14.4> <17.28> <20.74> <24.88>
mathx10
}{}
\DeclareSymbolFont{mathx}{U}{mathx}{m}{n}
\DeclareMathAccent{\widecheck}{0}{mathx}{"71}
\numberwithin{equation}{section}
\begin{document}

\title{Fourier restriction implies maximal and variational Fourier restriction}

\author{Vjekoslav Kova\v{c}}
\address{Vjekoslav Kova\v{c}, Department of Mathematics, Faculty of Science, University of Zagreb, Bijeni\v{c}ka cesta 30, 10000 Zagreb, Croatia}
\email{vjekovac@math.hr}


\subjclass[2010]{42B10} 

\begin{abstract}
We give an abstract argument that an a priori Fourier restriction estimate for a certain choice of exponents automatically implies maximal and variational Fourier restriction estimates. These, in turn, provide pointwise and quantitative interpretations of restriction of the Fourier transform to the hypersurface in question.
\end{abstract}

\maketitle


\section{Introduction and statement of the results}
The Fourier restriction problem, in its original formulation, starts with a complex-valued function $f$ in the space $\textup{L}^p(\mathbb{R}^d)$ for an appropriate exponent $p\in(1,2]$ and attempts to give a meaning to the restriction of its Fourier transform $\widehat{f}$ to a given hypersurface $S\subset\mathbb{R}^d$.
This task is performed in an operator-flavored fashion, by establishing an a priori estimate for Schwartz functions, namely
\begin{equation}\label{eq:restr}
\big\|\widehat{f}\,\big\|_{\textup{L}^q(S,\sigma)} \leq C_{\textup{restr}} \|f\|_{\textup{L}^p(\mathbb{R}^d)}
\end{equation}
with a finite constant $C_{\textup{restr}}$ and for some exponent $q\in[1,\infty]$. Here $\sigma$ is the surface measure on $S$, appropriately weighted if necessary.
A lot of research has been done on determining the largest possible range of exponents $(p,q)$ in inequality \eqref{eq:restr} for a fixed hypersurface $S$. The program was initiated by Stein; see the first lemma in Section IV of Fefferman's paper \cite{F70:conv}. Today it employs a variety of modern analytical tools, but the full range of exponents remains largely unsolved in dimensions $d\geq 3$. The reader can consult the expository text by Tao \cite{T04:restr} for an overview of developments prior to 2004, and the papers by Bourgain and Guth \cite{BG11:restr}, Guth \cite{G16:restr1,G16:restr2}, and Hickman and Rogers \cite{HR18:restr} for just some of the numerous recent results.

In the following remark we only recall three very classical results in the restriction theory, as we will refer to them later.

\begin{remark}\label{rem:classical}
(a) Three examples of surfaces deserve special attention: the \emph{sphere} $\mathbb{S}^{d-1}\subset\mathbb{R}^d$ with its surface measure $\sigma$, the \emph{whole paraboloid} in $\mathbb{R}^d$,
\[ \Big\{ \xi = \Big(\eta,\frac{1}{2}|\eta|^2\Big) \,:\, \eta\in\mathbb{R}^{d-1} \Big\}, \]
with $\textup{d}\sigma(\xi) = \textup{d}\eta$, and the \emph{whole cone} in $\mathbb{R}^d$,
\[ \big\{ \xi = \big(\eta,|\eta|\big) \,:\, \eta\in\mathbb{R}^{d-1} \big\}, \]
with $\textup{d}\sigma(\xi) = \textup{d}\eta / |\xi|$.
Here $|v|$ denotes the Euclidean length of a vector $v$.
The famous \emph{restriction conjecture} claims that estimate \eqref{eq:restr} holds in the cases of the sphere and the whole paraboloid whenever
\begin{equation}\label{eq:rangesphere}
p<\frac{2d}{d+1},\quad q=\frac{(d-1)p'}{d+1}
\end{equation}
and in the case of the whole cone whenever
\begin{equation}\label{eq:rangecone}
p<\frac{2(d-1)}{d},\quad q=\frac{(d-2)p'}{d}.
\end{equation}
Here and in what follows $p'$ denotes the conjugated exponent of $p$.
Indeed, the so-called Knapp examples (see \cite{T04:restr}) show that conditions \eqref{eq:rangesphere} and \eqref{eq:rangecone} are necessary for estimate \eqref{eq:restr} to hold in the cases of the whole paraboloid and the whole cone respectively. The same reasoning applied to the sphere shows that \eqref{eq:restr} is possible only when $p<2d/(d+1)$ and $q\leq(d-1)p'/(d+1)$, which is simply range \eqref{eq:rangesphere} trivially extended by lowering the exponent $q$.

(b) The restriction problem is essentially solved in $d=2$ dimensions. Estimates \eqref{eq:restr} for compact $\textup{C}^2$ curves $S$ with nonnegative signed curvature and when $\sigma$ is an appropriately weighted arclength measure (i.e., it is the so-called affine arclength measure) were confirmed in the largest possible range, $p<4/3$ and $q\leq p'/3$, by Carleson and Sj\"{o}lin \cite{CS72:2d} and Sj\"{o}lin \cite{S74:2d}.

(c) The $T^\ast T$ method is efficient in the case $q=2$, even in higher dimensions. For $d\geq 3$ and the sphere $\mathbb{S}^{d-1}$ the Tomas--Stein theorem \cite{T75:restr} gives estimate \eqref{eq:restr} for
\begin{equation}\label{eq:rangets}
p\leq\frac{2(d+1)}{d+3},\quad q=2.
\end{equation}
This range of exponent $p$ is optimal with $q$ fixed.
\end{remark}

In this note we rather concentrate on another type of questions, in the direction of the maximal Fourier restriction theory, proposed recently by M\"{u}ller, Ricci, and Wright \cite{MRW16:maxrestr}.
Suppose that $\mu$ is a complex measure on Borel subsets of $\mathbb{R}^d$. (Its finiteness is included in the definition of complex measures.) It will serve as an averaging measure. For $t\in(0,\infty)$ let $\mu_t$ be another complex measure defined by the formula
\[ \mu_t(E) := \mu(t^{-1}E) \]
for each Borel set $E\subseteq\mathbb{R}^d$.
If $\mu$ is absolutely continuous with respect to the Lebesgue measure and its density is $\chi\in\textup{L}^1(\mathbb{R}^d)$, i.e.~$\textup{d}\mu(x)=\chi(x)\textup{d}x$, then $\mu_t$ has density $\chi_t$, where
\[ \chi_t(x) := \frac{1}{t^d} \chi\Big(\frac{x}{t}\Big). \]
It is natural to consider convolution-type averages $\widehat{f}\ast\mu_t$ (or $\widehat{f}\ast\chi_t$) of the Fourier transform of $f$ with respect to the averaging measure $\mu$ (or the function $\chi$).

Before we formulate our results, let us agree on some notation. Let $A,B\colon X\to[0,\infty)$ be two functions defined on the same set and let $P$ be some set of parameters. If there exists a constant $C_P\in[0,\infty)$, depending only on the parameters from $P$, such that $A(x)\leq C_P B(x)$ for each $x\in X$, then we write $A(x)\lesssim_P B(x)$. When the set of parameters $P$ is empty, we simply write $A(x)\lesssim B(x)$.

As the first new result we will show that estimate \eqref{eq:restr}, under reasonable conditions on $p$, $q$, and $\mu$, automatically implies its variant for maximal averages of $\widehat{f}$ .

\begin{theorem}\label{thm:theorem1}
Let $S$ be a Borel subset of $\mathbb{R}^d$ and $\sigma$ a measure on Borel subsets of $S$. Let $\mu$ be a complex-valued measure on Borel subsets of $\mathbb{R}^d$ such that its Fourier transform $\widehat{\mu}$ is a $\textup{C}^\infty$ function satisfying
\begin{equation}\label{eq:mudecay}
|\nabla\widehat{\mu}(x)| \leq D (1+|x|)^{-1-\eta}
\end{equation}
for each $x\in\mathbb{R}^d$ with a constant $D\in[0,\infty)$ and a parameter $\eta\in(0,\infty)$. Finally, take exponents $p\in[1,2]$ and $q\in(1,\infty)$ such that $p<q$.
Suppose that \eqref{eq:restr} holds for every Schwartz function $f$ on $\mathbb{R}^d$, with some constant $C_{\textup{restr}}\in[0,\infty)$.
Then we also have
\begin{equation}\label{eq:maxrestr}
\Big\| \sup_{t\in(0,\infty)} \big| \widehat{f}\ast\mu_t \big| \Big\|_{\textup{L}^q(S,\sigma)}
\lesssim_{p,q,\eta} C_{\textup{restr}} D \|f\|_{\textup{L}^p(\mathbb{R}^d)}
\end{equation}
for every Schwartz function $f$.
\end{theorem}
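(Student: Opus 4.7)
My plan is to combine a linearization of the supremum with a Sobolev / fundamental-theorem-of-calculus identity in the scale parameter $t$, and then apply the assumed restriction estimate \eqref{eq:restr} to two auxiliary operators obtained by differentiating in $t$. By a standard linearization it is enough to bound $\|(\widehat{f}\ast\mu_{t(\cdot)})(\cdot)\|_{L^q(S,\sigma)}$ uniformly over all measurable selectors $t: S \to (0,\infty)$. A straightforward swap of integrals yields
\[
(\widehat{f}\ast\mu_t)(\xi) \;=\; \int_{\mathbb{R}^d} f(x)\, e^{-2\pi i x\cdot \xi}\, \widehat{\mu}(-tx)\, dx,
\]
and writing $\widehat{\mu}(-tx) = \widehat{\mu}(0) - \int_0^t x \cdot \nabla \widehat{\mu}(-sx)\, ds$ by the fundamental theorem of calculus splits this as $\widehat{\mu}(0)\widehat{f}(\xi) - \int_0^{t(\xi)} V_s f(\xi)\, ds$, where
\[
V_s f(\xi) \;:=\; \widehat{g_s}(\xi), \qquad g_s(x) \;:=\; f(x)\, x \cdot \nabla\widehat{\mu}(-sx).
\]
The first summand is controlled immediately by $|\widehat{\mu}(0)|\,C_{\textup{restr}}\|f\|_{L^p}$ via \eqref{eq:restr}, and $|\widehat{\mu}(0)|$ itself is $\lesssim_\eta D$ after integrating $\nabla\widehat{\mu}$ along a ray and absorbing the (finite) limit at infinity into the constants, so this term is harmless.

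For the oscillating part I would use the elementary Sobolev / FTC inequality, valid since $q>1$ and applicable to any smooth $H(s)$ with $H(\infty)=0$:
\[
\sup_{t>0} |H(t)|^q \;\leq\; q \int_0^\infty |H(s)|^{q-1}\, |H'(s)|\, ds,
\]
with $H(s,\xi) := \int_s^\infty V_u f(\xi)\,du = \widehat{\widetilde g_s}(\xi)$, where $\widetilde g_s(x) := f(x)(\widehat{\mu}(-sx) - \widehat{\mu}(\infty))$. Since $\partial_s H(s,\xi) = -V_s f(\xi)$, integrating the above over $\xi \in S$, applying H\"older in $\xi$ with exponents $q/(q-1)$ and $q$, and invoking \eqref{eq:restr} on each of $H(s,\cdot)$ and $V_s f$ reduces the theorem to the scalar inequality
\[
\int_0^\infty \bigl\|\widetilde{g}_s\bigr\|_{L^p(\mathbb{R}^d)}^{q-1}\, \bigl\|g_s\bigr\|_{L^p(\mathbb{R}^d)}\, ds \;\lesssim_{p,q,\eta}\; D^q \|f\|_{L^p(\mathbb{R}^d)}^q.
\]
The pointwise bounds coming from \eqref{eq:mudecay}, namely $|\widetilde g_s(x)| \lesssim_\eta D|f(x)|\min(s|x|,1)$ and $|g_s(x)|\leq D|f(x)||x|(1+s|x|)^{-1-\eta}$, make this plausible: the substitution $u=s|x|$ in $s$ shows that $\int_0^\infty \min(s|x|,1)^{q-1}\cdot|x|(1+s|x|)^{-1-\eta}\,ds$ is bounded by a constant depending only on $q$ and $\eta$, independent of $x$, which is a pointwise analog of what one wants.

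The main obstacle I anticipate is precisely the passage from this clean pointwise bound to the $L^p$-level statement above. For $f$ concentrated on a single annulus the estimate is essentially immediate by scaling, but for $f$ spread over many dyadic scales a direct H\"older/Minkowski argument tends to lose a power of the number of scales. My expectation is that this loss is absorbed by exploiting the strict inequality $p<q$ --- for instance, by decomposing $f=\sum_j f_j$ into annular pieces at dyadic scales, noting that the kernel $(s,s')\mapsto \int \phi_{j}(s)\phi_{j'}(s')\,ds$ decays off the diagonal $j=j'$ on account of the decay in \eqref{eq:mudecay}, and invoking a Schur-test or Young-type inequality in the scale index. The condition $p<q$ is what converts this off-diagonal decay into an honest $L^p\to L^p$ type bound rather than a logarithmic loss, and it is there that the strict inequality (and the fact that $q>1$, needed for the initial Sobolev step) is decisive.
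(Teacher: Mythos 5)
Your overall skeleton (linearize, FTC in the scale variable, one Sobolev-type inequality in $t$, H\"older in $\xi$, then two applications of \eqref{eq:restr}) is internally consistent up to the point you yourself flag, but the scalar inequality you reduce the theorem to is not just hard --- it is false whenever $p>1$, so the plan cannot be completed in this form. Concretely, take $\textup{d}\mu=\chi\,\textup{d}x$ with $\chi$ a standard bump, $\int\chi=1$ (so $D\lesssim 1$ for any fixed $\eta$), and $f=\sum_{j=1}^{n}f_j$ with $f_j$ an $\textup{L}^p$-normalized bump on the annulus $|x|\sim 2^j$, so $\|f\|_{\textup{L}^p}\sim n^{1/p}$. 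For $s\sim 2^{-k}$ with $1\le k\le n$ one has $\|g_s\|_{\textup{L}^p}\gtrsim 2^k$ (from the annulus $j\approx k$, where $|x\cdot\nabla\widehat{\chi}(-sx)|\sim s|x|^2\sim 2^k$), while $\|\widetilde{g}_s\|_{\textup{L}^p}\gtrsim (n-k)^{1/p}$, because $|\widehat{\chi}(-sx)-\widehat{\chi}(0)|\geq 1/2$ on every annulus $j\geq k+O(1)$; the same count (with $k$ in place of $n-k$) holds if you anchor at $\widehat{\mu}(\infty)$ instead of $\widehat{\mu}(0)$ --- note in passing that your stated pointwise bound $|\widetilde g_s(x)|\lesssim D|f(x)|\min(s|x|,1)$ matches the anchor at $0$, not the anchor at $\infty$ used in your definition of $H$, and that the claim $|\widehat{\mu}(0)|\lesssim_\eta D$ is not justified either (a point mass at the origin has $D=0$). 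Hence $\int_0^\infty\|\widetilde g_s\|_{\textup{L}^p}^{q-1}\|g_s\|_{\textup{L}^p}\,\textup{d}s\gtrsim\sum_{k=1}^{n}(n-k)^{(q-1)/p}\sim n^{1+(q-1)/p}$, whereas $D^q\|f\|_{\textup{L}^p}^q\sim n^{q/p}$, and $1+(q-1)/p>q/p$ exactly when $p>1$. So the reduction loses a factor $n^{(p-1)/p}$ in the number of dyadic scales of $f$, i.e.\ the loss you anticipated is real and polynomial, and it occurs already for the admissible measures covered by the theorem.

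Moreover, the repair you propose cannot work within this reduction: the problematic factor $\|\widetilde g_s\|_{\textup{L}^p}^{q-1}$ has no off-diagonal decay in the scale index, since $\widehat{\mu}(-sx)-\widehat{\mu}(0)$ stays of unit size on \emph{all} annuli $|x|\gg 1/s$, so there is nothing for a Schur or Young test to exploit; the damage is done the moment you apply H\"older in $\xi$ and then \eqref{eq:restr} to $H(s,\cdot)$ for each fixed $s$, because that step treats every scale of $f$ on equal footing and the subsequent $\textup{d}s$-integration double counts them. This is precisely the known obstruction to the ``FTC plus Cauchy--Schwarz in $t$'' scheme, and it is what the paper is designed to avoid: there one first performs a Littlewood--Paley-type decomposition of $\mu$ into pieces $\psi^{(s)}$ with Fourier support in a fixed annulus (with norms summable in $\textup{d}s/s$ thanks to \eqref{eq:mudecay}), and then proves the maximal/variational bound for each single piece by the Christ--Kiselev induction on scales (long variations) together with an elementary $\ell^p$-valued square-function bound (short variations). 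In that argument the hypothesis $p<q$ enters through the convergent geometric factor $2^{1/q-1/p}<1$ in the induction --- a genuinely different mechanism from off-diagonal decay --- and that is the ingredient your outline is missing. To salvage your approach you would have to replace the H\"older-in-$\xi$ step by some device that keeps the scales of $f$ quasi-orthogonal across $s$, which in effect means reproducing a Christ--Kiselev-type argument.
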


Even though the condition $p<q$ is not always present in the literature on restriction estimates \eqref{eq:restr}, it is still not very restrictive.
For instance, both ranges \eqref{eq:rangesphere} and \eqref{eq:rangecone} fall inside the region $p<q$, so we immediately get the following conditional result.

\begin{corollary}
If restriction estimate \eqref{eq:restr} holds for the sphere or the whole paraboloid and a pair of exponents satisfying \eqref{eq:rangesphere}, or for the whole cone and a pair of exponents satisfying \eqref{eq:rangecone}, then it automatically upgrades to its maximal variant \eqref{eq:maxrestr}, where $\mu$ is as in Theorem~\ref{thm:theorem1}.
\end{corollary}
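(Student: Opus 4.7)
The plan is that the corollary is an immediate application of Theorem~\ref{thm:theorem1}: the sphere, paraboloid, and cone are all Borel subsets of $\mathbb{R}^d$ equipped with natural (possibly weighted) surface measures, and $\mu$ is by hypothesis of the form required by the theorem. The only nontrivial point is to verify that the exponents $(p,q)$ prescribed by \eqref{eq:rangesphere} and \eqref{eq:rangecone} fall into the admissible region $p\in[1,2]$, $q\in(1,\infty)$, and, crucially, $p<q$.

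For the key condition $p<q$ in range \eqref{eq:rangesphere}, I would substitute $q=(d-1)p'/(d+1)$ and clear denominators to reduce $p<q$ to $(p-1)(d+1)<d-1$ (using $p'/p=1/(p-1)$), which is exactly the assumed upper bound $p<2d/(d+1)$. The identical rearrangement in range \eqref{eq:rangecone} shows the equivalence of $p<q$ with $p<2(d-1)/d$. The membership $p\in[1,2]$ is immediate from $2d/(d+1)<2$ and $2(d-1)/d<2$; the condition $q>1$ is in fact equivalent to the same upper bound on $p$, and $q<\infty$ holds once one implicitly restricts to $p>1$ (the case $p=1$ gives a trivial endpoint where $q=\infty$ and \eqref{eq:restr} reduces to the pointwise bound $|\widehat{f}|\leq\|f\|_{\textup{L}^1}$, outside the scope of Theorem~\ref{thm:theorem1}).

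With every hypothesis of Theorem~\ref{thm:theorem1} now checked, that theorem applies directly and delivers \eqref{eq:maxrestr}. I expect no real obstacle here: the content of the corollary is precisely the observation, already highlighted in the paragraph preceding its statement, that the canonical Knapp-type ranges lie strictly inside the region $\{p<q\}$ on which Theorem~\ref{thm:theorem1} operates, so that the upgrade from \eqref{eq:restr} to \eqref{eq:maxrestr} is automatic throughout the conjectural range.
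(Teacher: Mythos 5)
Your proposal is correct and follows essentially the same route as the paper, which proves the corollary simply by noting that the ranges \eqref{eq:rangesphere} and \eqref{eq:rangecone} lie inside the region $p<q$ and then invoking Theorem~\ref{thm:theorem1}; your algebraic verification that $p<q$ reduces exactly to $p<2d/(d+1)$ (resp.\ $p<2(d-1)/d$) is the computation the paper leaves implicit. (Minor quibble: $q>1$ need not be \emph{equivalent} to that upper bound on $p$, but it follows automatically from $q>p\geq 1$, so nothing is affected.)
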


Two instances of the maximal Fourier restriction estimate \eqref{eq:maxrestr} have appeared in the literature so far.

First, M\"{u}ller, Ricci, and Wright \cite{MRW16:maxrestr} established the maximal Fourier restriction inequality when $S$ is a compact $\textup{C}^2$ curve with nonnegative signed curvature, $\sigma$ is its affine arclength measure, $\textup{d}\mu(x)=\chi(x)\textup{d}x$, and $\chi$ is a Schwartz function, while the exponents $p,q$ are taken from the largest possible range for \eqref{eq:restr}, which is $p<4/3$, $q\leq p'/3$. In other words, they established a maximal variant of the result from part (b) of Remark~\ref{rem:classical}. By applying Theorem~\ref{thm:theorem1} with $p<4/3$ and $q=p'/3>p$ this result becomes a consequence of papers \cite{CS72:2d} and \cite{S74:2d}.

Second, Vitturi \cite{V17:maxrestr} established \eqref{eq:maxrestr} when $S$ is the sphere $\mathbb{S}^{d-1}$ in $\mathbb{R}^d$, $\sigma$ is the surface measure, $p\leq 4/3$, and $q\leq (d-1)p'/(d+1)$. Combining Theorem~\ref{thm:theorem1} with the Tomas--Stein theorem \cite{T75:restr} and observing that the exponents from \eqref{eq:rangets} still satisfy $p<q$, we can strengthen the result from \cite{V17:maxrestr} in the most interesting case $q=2$.

\begin{corollary}\label{cor:tomstein}
Let $S=\mathbb{S}^{d-1}$ be the sphere in $\mathbb{R}^d$, let $\sigma$ be the surface measure on $\mathbb{S}^{d-1}$, and let $\mu$ be as in Theorem~\ref{thm:theorem1}. The maximal estimate \eqref{eq:maxrestr} holds with $p\leq2(d+1)/(d+3)$ and $q=2$.
\end{corollary}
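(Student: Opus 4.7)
The plan is to derive Corollary \ref{cor:tomstein} as an essentially immediate consequence of Theorem~\ref{thm:theorem1} combined with the Tomas--Stein restriction theorem recalled in part (c) of Remark~\ref{rem:classical}. The idea is that the only real work is a compatibility check between the hypotheses of the two results; once those line up, the conclusion follows by quoting the theorem.

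First, I would fix the exponents $p \in [1, 2(d+1)/(d+3)]$ and $q = 2$ and recall that the Tomas--Stein theorem supplies the a priori estimate \eqref{eq:restr} for the sphere $S = \mathbb{S}^{d-1}$ with surface measure $\sigma$ in precisely this range, with some finite constant $C_{\textup{restr}}$ depending on $d$ and $p$. The measure $\mu$ in the statement of the corollary is assumed to satisfy the smoothness and decay condition \eqref{eq:mudecay}, so the hypotheses on $\widehat{\mu}$ required by Theorem~\ref{thm:theorem1} are already in place.

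Next, I would verify the two exponent constraints of Theorem~\ref{thm:theorem1}: one needs $p \in [1,2]$, $q \in (1,\infty)$, and the strict inequality $p < q$. The first two are clear. For the strict inequality, the worst case is the endpoint $p = 2(d+1)/(d+3)$, which can be rewritten as $p = 2 - 4/(d+3)$; since $d \geq 3$, this is strictly less than $2 = q$. All hypotheses of Theorem~\ref{thm:theorem1} are therefore satisfied.

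Finally, I would simply invoke Theorem~\ref{thm:theorem1} to conclude
\[
\Big\| \sup_{t \in (0,\infty)} \bigl| \widehat{f} \ast \mu_t \bigr| \Big\|_{\textup{L}^2(\mathbb{S}^{d-1}, \sigma)}
\lesssim_{d,p,\eta} C_{\textup{restr}} D \, \|f\|_{\textup{L}^p(\mathbb{R}^d)}
\]
for every Schwartz $f$, which is \eqref{eq:maxrestr} in the claimed range. Since the argument is a direct citation, there is really no obstacle here; the only thing worth mentioning carefully is the endpoint check that $p < q$ remains strict at the Tomas--Stein endpoint, so that Theorem~\ref{thm:theorem1} applies without loss.
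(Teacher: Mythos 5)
Your proposal is correct and matches the paper's own reasoning: Tomas--Stein supplies the a priori estimate \eqref{eq:restr} with $q=2$ in the range \eqref{eq:rangets}, one checks $p\leq 2(d+1)/(d+3)<2=q$ so the hypothesis $p<q$ of Theorem~\ref{thm:theorem1} holds, and the maximal estimate \eqref{eq:maxrestr} follows by direct citation. Nothing further is needed.
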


It is worth mentioning that the method used in \cite{MRW16:maxrestr} runs into problems in dimensions $d\geq 3$ and it is not clear if Corollary~\ref{cor:tomstein} can be deduced in a similar way. On the other hand, the paper \cite{V17:maxrestr} relies on the trick of expanding out the $\textup{L}^4$-norm of the extension operator, so it can only handle a strict subrange of the Tomas--Stein range for $d\geq 4$, i.e., it stays away from the endpoint $p=2(d+1)/(d+3)$. Finally, the most classical $T^\ast T$ trick is not efficient in the context of the maximal Fourier restriction and, just by itself, it certainly cannot prove Corollary~\ref{cor:tomstein}.

With just a bit more effort we can strengthen Theorem~\ref{thm:theorem1} to variational estimates of the averages $\widehat{f}\ast\mu_t$. Let us only remark that the variation norms were introduced to harmonic analysis by Bourgain \cite{B89:pt} in order to prove a.e.~convergence when there is no obvious dense class on which convergence holds.

\begin{theorem}\label{thm:theorem2}
Let $S$, $\sigma$, $D$, $\eta$, $\mu$, $p$, and $q$ be as in Theorem~\ref{thm:theorem1}, and additionally take $\varrho\in(p,\infty)$.
Suppose that \eqref{eq:restr} holds for every Schwartz function $f$ on $\mathbb{R}^d$, with some constant $C_{\textup{restr}}\in[0,\infty)$.
Then we also have
\begin{equation}\label{eq:varrestr}
\bigg\| \sup_{\substack{m\in\mathbb{N}\\ 0<t_0<t_1<\cdots<t_m}} \Big( \sum_{j=1}^{m} \big| \widehat{f}\ast\mu_{t_{j}} - \widehat{f}\ast\mu_{t_{j-1}} \big|^{\varrho} \Big)^{1/\varrho} \bigg\|_{\textup{L}^q(S,\sigma)}
\lesssim_{p,q,\varrho,\eta} C_{\textup{restr}} D \|f\|_{\textup{L}^p(\mathbb{R}^d)}
\end{equation}
for every Schwartz function $f$.
\end{theorem}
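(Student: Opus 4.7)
The approach is a classical long-plus-short dyadic decomposition of the $\varrho$-variation. Set $a_t(\xi):=(\widehat{f}*\mu_t)(\xi)$. By the convolution theorem, $a_t(\xi)$ is the Fourier transform (at $\xi$) of the function $x\mapsto f(x)\widehat{\mu}(-tx)$. Integrating hypothesis \eqref{eq:mudecay} along rays, the increments $v_k:=a_{2^{k+1}}-a_{2^k}$ are the Fourier transforms of $f\cdot M_k$, where $M_k(x):=\widehat{\mu}(-2^{k+1}x)-\widehat{\mu}(-2^k x)$ satisfies
$$|M_k(x)|\lesssim_\eta D\min\bigl(2^k|x|,(2^k|x|)^{-\eta}\bigr);$$
analogously, $\partial_t a_t=\widehat{f\cdot N_t}$ with $|N_t(x)|\leq D|x|(1+t|x|)^{-1-\eta}$. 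Inserting the dyadic endpoints $2^k$ into any partition $0<t_0<\cdots<t_m$ and applying Minkowski in $\ell^\varrho$ shows pointwise in $\xi\in S$ that
$$\Big(\sum_{j=1}^m|a_{t_j}(\xi)-a_{t_{j-1}}(\xi)|^\varrho\Big)^{1/\varrho}\lesssim V_L(\xi)+V_S(\xi),$$
with $V_L(\xi):=V^\varrho\bigl((a_{2^k}(\xi))_{k\in\mathbb{Z}}\bigr)$ and $V_S(\xi):=\bigl(\sum_{k\in\mathbb{Z}}V^\varrho_{[2^k,2^{k+1}]}(a_\cdot(\xi))^\varrho\bigr)^{1/\varrho}$ the long and short variations.

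For the short variation, dominate $V^\varrho_{[2^k,2^{k+1}]}\leq V^1_{[2^k,2^{k+1}]}=\int_{2^k}^{2^{k+1}}|\partial_\tau a_\tau|\,d\tau$. Apply in order: the pointwise embedding $\ell^p\hookrightarrow\ell^\varrho$ (valid since $\varrho>p$), Minkowski in $\textup{L}^{q/p}$ (valid since $q>p$), Minkowski and H\"older in $\tau$ over each $[2^k,2^{k+1}]$, and restriction \eqref{eq:restr} to $\partial_\tau a_\tau$. The Riemann-sum estimate $\sum_k(2^k)^{p-1}\int_{2^k}^{2^{k+1}}(\cdot)\,d\tau\leq\int_0^\infty\tau^{p-1}(\cdot)\,d\tau$ and Fubini then reduce matters to bounding
$$D^p\int_{\mathbb{R}^d}|f(x)|^p|x|^p\int_0^\infty\tau^{p-1}(1+\tau|x|)^{-p(1+\eta)}\,d\tau\,dx$$
by a constant multiple of $D^p\|f\|_{\textup{L}^p}^p$. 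The substitution $u=\tau|x|$ collapses the inner integral to $c_{p,\eta}|x|^{-p}$, where $c_{p,\eta}=B(p,p\eta)$ is a Beta-function value---finite precisely because $\eta>0$---and the $|x|^{-p}$ cancels the outer $|x|^p$, giving $\|V_S\|_{\textup{L}^q(S,\sigma)}\lesssim_{p,\eta}C_{\textup{restr}}D\|f\|_{\textup{L}^p}$.

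For the long variation, the pivotal auxiliary estimate is the $\ell^p$-Littlewood-Paley-type bound
$$\Big\|\Big(\sum_{k\in\mathbb{Z}}|v_k|^p\Big)^{1/p}\Big\|_{\textup{L}^q(S,\sigma)}\lesssim_{p,\eta}C_{\textup{restr}}D\|f\|_{\textup{L}^p},$$
which follows from Minkowski in $\textup{L}^{q/p}$, restriction \eqref{eq:restr} applied to each $f\cdot M_k$, and the pointwise summability $\sum_k|M_k(x)|^p\lesssim_{p,\eta}D^p$ (the envelope $\min(2^k|x|,(2^k|x|)^{-\eta})$ depends only on $2^k|x|$, so summing in $k$ yields a geometric constant). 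The remaining step---controlling $V_L$ by this $\ell^p$-square function---is the main anticipated obstacle, since it is \emph{not} a pointwise inequality between $V^\varrho$ and consecutive differences. I would effect it by a Rademacher--Menshov-type dyadic-pigeonhole argument grouping consecutive dyadic scales into blocks of size $2^j$ and summing in $j$, with the strict inequality $\varrho>p$ absorbing the resulting logarithmic losses. The roles of all hypotheses then become transparent: $\varrho>p$ for both the $\ell^p\hookrightarrow\ell^\varrho$ embeddings and the long-variation pigeonhole; $q>p$ for the Minkowski exchanges in $\textup{L}^{q/p}$; and $\eta>0$ for convergence of both the $\beta$-integral in the short variation and the geometric $\ell^p$-sum of the envelopes of $M_k$ in the long.
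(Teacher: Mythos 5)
Your short-variation argument and your $\ell^p$-square-function bound for the dyadic increments $v_k=\widehat{f\,M_k}$ are correct (and close in spirit to the paper's treatment of short variations: both rest on $q>p$ to interchange norms and on integrating the decay \eqref{eq:mudecay}, with the Beta-type integral converging because $\eta>0$). The genuine gap is exactly at the step you flag: passing from that square function to the long variation $V_L$. The mechanism you propose---grouping consecutive dyadic scales into blocks of length $2^j$, Rademacher--Menshov, and letting $\varrho>p$ absorb ``logarithmic losses''---does not work. For a block $I=(a,b]$ of scales the coarsened multiplier telescopes to $M_I(x)=\widehat{\mu}(-2^{b+1}x)-\widehat{\mu}(-2^{a}x)$, which satisfies the same bounds as a single $M_k$; hence at every block generation $j$ one gets $\sum_{I}\|fM_I\|_{\textup{L}^p}^p\lesssim_{p,\eta}D^p\|f\|_{\textup{L}^p}^p$ \emph{uniformly in $j$}, with no decay. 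Since the scale range $k\in\mathbb{Z}$ is infinite (and, even truncated, its length depends on $f$), the resulting sum over generations is not a logarithmic loss but a divergent series; and the exponent $\varrho$ never interacts with the number of generations, so ``$\varrho>p$'' provides no mechanism to absorb it. The actual source of geometric decay across generations must be the off-diagonal gap $q>p$: in the paper this enters through a Christ--Kiselev/OSTTW bisection (Lemma~\ref{lm:cklemma}), where the blocks are chosen adaptively by halving the $\textup{L}^p$ mass of $|f|\sum_k|\Psi_k|$ rather than by counting scales, so that generation $j$ contributes $\lesssim 2^{j(1/q-1/p)}C_{\textup{restr}}\|f\|_{\textup{L}^p}$ and the series converges precisely because $q>p$ (this is the constant $(1-2^{1/q-1/p})^{-1}$ there).

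A second, related obstruction: to run such a bisection one needs the frequency pieces to have pairwise \emph{disjoint supports}, so that the $\textup{L}^p$ masses are additive and every partial sum $\sum_{k\le n}\widehat{f\Psi_k}$ is again $\widehat{g}$ for a single function $g$ to which \eqref{eq:restr} applies. Your raw increments $M_k$ are globally supported with mere decay; the pointwise summability $\sum_k|M_k(x)|^p\lesssim D^p$ is an adequate substitute for square functions but not for the variational bisection. The paper manufactures the disjointness by a smooth Littlewood--Paley decomposition of $x\cdot\nabla\widehat{\mu}$ into pieces $\psi^{(s)}$ whose Fourier transforms live in a fixed annulus, proves the dyadic-plus-short variational bound \eqref{eq:combined} for each fixed $s$ via Lemma~\ref{lm:cklemma}, and then integrates in $s$ using the decay $\min\{s^{\eta},s^{-1}\}$ inherited from \eqref{eq:mudecay}. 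So the missing ingredient in your outline is precisely this mass-adaptive Christ--Kiselev argument together with the reduction to compactly frequency-supported pieces; without it (or some equivalent, e.g.\ a L\'{e}pingle/jump-inequality input, which is not available here), the long-variation half of the proof does not close.
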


A particular case of estimate \eqref{eq:varrestr} when $S$ is the sphere $\mathbb{S}^2$ in $\mathbb{R}^3$, $\textup{d}\mu(x)=\chi(x)\textup{d}x$, $\chi$ is either a Schwartz function or the characteristic function of the unit ball, $p=4/3$, $q=2$, and $\varrho>2$ was established by Oliveira e Silva and the author in \cite{KOeS18:varrestr}, with a clear extension to higher dimensions and the same range of exponents as in \cite{V17:maxrestr}. Theorem~\ref{thm:theorem2} implies this result because the corresponding a priori estimates \eqref{eq:restr} fall inside the range of the Tomas--Stein restriction theorem \cite{T75:restr}; see the comments preceding Corollary~\ref{cor:tomstein}.

Throughout the paper we will be focused on proving Theorem~\ref{thm:theorem2}, while Theorem~\ref{thm:theorem1} will be easily reduced to it.
It is curious to remark that, even if one is primarily interested in the maximal Fourier restriction \eqref{eq:maxrestr}, it is still more in the spirit of our approach to begin by showing the variational estimate \eqref{eq:varrestr}.

A source of motivation for the maximal Fourier restriction, originating in \cite{MRW16:maxrestr}, is that the maximal inequality \eqref{eq:maxrestr} gives a pointwise meaning to the restriction $\widehat{f}\vert_S$. When we combine it with obvious pointwise convergence on a dense subset of $\textup{L}^p(\mathbb{R}^d)$, we can conclude that the limit of $(\widehat{f}\ast\mu_\varepsilon)(\xi)$ as $\varepsilon\to0^+$ exists at $\sigma$-almost every point $\xi\in S$. Moreover, the variational estimate \eqref{eq:varrestr}, when extended by density arguments to an arbitrary function $f\in\textup{L}^p(\mathbb{R}^d)$, quantifies this convergence.

Two remarks should further clarify the applicability of Theorems~\ref{thm:theorem1} and \ref{thm:theorem2}.

\begin{remark}\label{rem:remark1}
Let us comment on the possible choices of the averaging measure $\mu$.
When $\textup{d}\mu(x)=\chi(x)\textup{d}x$ for some $\chi\in\textup{L}^1(\mathbb{R}^d)$, one can take $\chi$ to be a Schwartz function or the characteristic function of the unit ball in $\mathbb{R}^d$. The assumptions on $\mu$ from Theorems~\ref{thm:theorem1} and \ref{thm:theorem2} are clearly satisfied in the first case. In the second case,
\[ \widehat{\chi}(x) = \frac{J_{d/2}(2\pi |x|)}{|x|^{d/2}} \]
and
\[ \nabla\widehat{\chi}(x) = \Big( \frac{\pi J_{d/2-1}(2\pi |x|)}{|x|^{d/2+1}} - \frac{\pi J_{d/2+1}(2\pi |x|)}{|x|^{d/2+1}} - \frac{d J_{d/2}(2\pi |x|)}{2|x|^{d/2+2}} \Big) x, \]
where $J_\alpha$ denotes the Bessel function of the first kind with parameter $\alpha$; see \cite{AS92:hmf} or do the computation in Mathematica \cite{W18:math}. Together with the asymptotics $J_{\alpha}(2\pi|x|)=O(|x|^{-1/2})$ as $|x|\to\infty$, this gives $|\nabla\widehat{\chi}(x)|=O(|x|^{-(d+1)/2})$ as $|x|\to\infty$, so condition \eqref{eq:mudecay} is satisfied with $\eta=(d-1)/2>0$ in all dimensions $d\geq 2$.
If $B(\omega,\varepsilon)$ denotes the ball centered at $\omega$ with radius $\varepsilon$, then $\widehat{f}\ast\mu_\varepsilon$ is a constant multiple of
\[ \frac{1}{|B(\omega,\varepsilon)|} \int_{B(\omega,\varepsilon)} \widehat{f}(\xi) \,\textup{d}\xi. \]
Under the hypotheses of Theorem~\ref{thm:theorem1} these averages converge as $\varepsilon\to0^+$ for $\sigma$-almost every point $\omega\in S$ whenever $f\in\textup{L}^p(\mathbb{R}^d)$. This clearly follows from the maximal estimate \eqref{eq:maxrestr} and pointwise convergence on the dense class consisting of Schwartz functions $f$. Theorem~\ref{thm:theorem2} then gives a quantitative variant of this convergence result.

Moreover, we can, for instance, take $\mu$ to be the surface measure on the unit sphere $\mathbb{S}^{d-1}\subset\mathbb{R}^d$ when $d\geq 4$. This time
\[ \widehat{\mu}(x) = \frac{2\pi J_{d/2-1}(2\pi |x|)}{|x|^{d/2-1}} \]
and the same computation as before (only lowering the parameters of the Bessel functions by $1$) gives $|\nabla\widehat{\mu}(x)|=O(|x|^{-(d-1)/2})$ as $|x|\to\infty$, so condition \eqref{eq:mudecay} is satisfied with $\eta=(d-3)/2>0$ in dimensions $d\geq 4$.
Note that $\widehat{f}\ast\mu_\varepsilon$ is a constant multiple of the spherical means of $\widehat{f}$,
\[ \frac{1}{\mu(\mathbb{S}^{d-1})} \int_{\mathbb{S}^{d-1}} \widehat{f}(\omega+\varepsilon\xi) \,\textup{d}\mu(\xi), \]
which are more singular than averages over balls. Theorems~\ref{thm:theorem1} and \ref{thm:theorem2} imply that the spherical means of $\widehat{f}$ for $f\in\textup{L}^p(\mathbb{R}^d)$ also converge as $\mathbb{Q}\ni\varepsilon\to0^+$ for $\sigma$-almost every point $\omega\in S$, as long as $d\geq 4$ and \eqref{eq:restr} holds with some exponent $q\in(p,\infty)$.
\end{remark}

\begin{remark}\label{rem:remark4}
Suppose that $p\in[1,2]$ and $q\in(1,\infty)$ are exponents such that $p<q$ and that \eqref{eq:restr} holds. Denote $\widetilde{p}=2p/(p+1)$ and $\widetilde{q}=2q$.
By taking $h$ to be $f$ convolved with its reflection about the origin, observing $\widehat{h}=|\widehat{f}|^2$, using Young's inequality for convolutions, and applying Theorem~\ref{thm:theorem1} with $h$ in the place of $f$, we get
\[ \Big\| \sup_{t\in(0,\infty)} \Big( \frac{1}{|B(\omega,t)|} \int_{B(\omega,t)} \big|\widehat{f}(\xi)\big|^2 \,\textup{d}\xi \Big)^{1/2} \Big\|_{\textup{L}^{\widetilde{q}}_{\omega}(S,\sigma)}
\lesssim_{S,\sigma,\widetilde{p},\widetilde{q}} \|f\|_{\textup{L}^{\widetilde{p}}(\mathbb{R}^d)} \]
and consequently also
\[ \Big\| \sup_{t\in(0,\infty)} \frac{1}{|B(\omega,t)|} \int_{B(\omega,t)} \big|\widehat{f}(\xi)\big| \,\textup{d}\xi \Big\|_{\textup{L}^{\widetilde{q}}_{\omega}(S,\sigma)}
\lesssim_{S,\sigma,\widetilde{p},\widetilde{q}} \|f\|_{\textup{L}^{\widetilde{p}}(\mathbb{R}^d)} \]
for each $f\in\textup{L}^{\widetilde{p}}(\mathbb{R}^d)$; recall the possible choices for $\mu$ from Remark~\ref{rem:remark1}. This trick was employed by M\"{u}ller, Ricci, and Wright in \cite{MRW16:maxrestr} in order to show that, for $\widetilde{p}<8/7$ and $f\in\textup{L}^{\widetilde{p}}(\mathbb{R}^2)$, almost every point (with respect to the arclength measure) at which the curvature of the curve $S$ does not vanish is a Lebesgue point for $\widehat{f}$.
Vitturi \cite{V17:maxrestr} repeated this argument in $d\geq 3$ dimensions for the spheres $\mathbb{S}^{d-1}\subset\mathbb{R}^d$ and functions $f\in\textup{L}^{\widetilde{p}}(\mathbb{R}^2)$ in the range $\widetilde{p}\leq 8/7$. Therefore, Theorem~\ref{thm:theorem1} also reproves and generalizes these two results. Interestingly, Ramos \cite{R18:maxrestr} adjusted the techniques from \cite{MRW16:maxrestr} and \cite{V17:maxrestr} in order to extend these results to the ranges $\widetilde{p}<4/3$ and $\widetilde{p}\leq 4/3$ respectively. Theorem~\ref{thm:theorem1} and this remark do not capture his extension.
\end{remark}

Let us say a few words on the scheme of the proof of Theorems~\ref{thm:theorem1} and \ref{thm:theorem2}. Instead of starting with a general averaging measure $\mu$, we begin by considering the case when the averages are convolutions with Schwartz functions that have compact frequency supports. Then we split the proof of estimate \eqref{eq:varrestr} into discussions of long and short variations, as explained, for instance, in the classical paper by Jones, Seeger, and Wright \cite{JSW08:var}. The part dealing with long variations relies simply on the abstract maximal principle of Christ and Kiselev \cite{CK01:max}, also generalized to variational estimates by Oberlin, Seeger, Tao, Thiele, and Wright \cite{OSTTW12:varcar} in a very similar context. The part dealing with short variations is even easier, as it is reduced to boundedness of a certain $\ell^p$-square function in a trivial off-diagonal range. Finally, for a general $\mu$ satisfying the above conditions we perform a smooth Littlewood-Paley-type partition of unity.

It is worth emphasizing that our approach is quite abstract and does not use any geometrical properties of the hypersurface or any properties of the restriction operator apart from the basic symmetries of the Fourier transform. In comparison, M\"{u}ller, Ricci, and Wright \cite{MRW16:maxrestr} rely in part on the approach to restriction estimates by Carleson and Sj\"{o}lin \cite{CS72:2d} and Zygmund \cite{Z74:2d}. Also, the paper by Oliveira e Silva and the present author \cite{KOeS18:varrestr} applies the non-oscillatory reformulation of a particular restriction estimate due to Vitturi \cite{V17:maxrestr}, while variational estimates for ordinary convolution-type averages (i.e.\@ a result by Bourgain \cite{B89:pt}) are still used as a black box. In this paper we need to be more careful when handling the variational estimates in order to guarantee disjointness of supports when performing the inductive step of the Christ--Kiselev argument.

As we have already remarked, this note reproves the main results of the papers \cite{KOeS18:varrestr,MRW16:maxrestr,V17:maxrestr} by recognizing them as direct consequences of the a priori restriction estimate \eqref{eq:restr}. However, our approach cannot entirely replace the techniques from these papers. For instance, M\"{u}ller, Ricci, and Wright \cite{MRW16:maxrestr} in their proof also establish estimates for a certain two-parameter maximal function of the Fourier transform, and it is not clear if these estimates can also be deduced from \eqref{eq:restr} used as a black box. Moreover, Ramos conveniently modified the approaches from \cite{MRW16:maxrestr,V17:maxrestr} in the aforementioned paper \cite{R18:maxrestr} (see Remark~\ref{rem:remark4}), but Theorem~\ref{thm:theorem1} does not reprove his results. The author hopes that this note will hint on how to formulate nontrivial problems and interesting future results in the theory of the maximal Fourier restriction.

\section{Notation}
We begin by explaining the notation used throughout the paper. The letters $\mathbb{N}$, $\mathbb{Z}$, and $\mathbb{R}$ respectively denote the sets of positive integers, all integers, and real numbers. The indicator function of any set $E$ will be denoted by $\mathbbm{1}_E$. The standard scalar product of vectors $x,y\in\mathbb{R}^d$ will be written as $x\cdot y$, while the Euclidean length of $x\in\mathbb{R}^d$ will be written as $|x|$.
Whenever the measure is not specified, it is understood that the integration is performed with respect to the Lebesgue measure. The same applies to the Lebesgue spaces $\textup{L}^p(\mathbb{R}^d)$.
The space of complex Schwartz functions on $\mathbb{R}^d$ is denoted by $\mathcal{S}(\mathbb{R}^d)$.

The Fourier transform of a complex measure $\mu$ on Borel subsets of $\mathbb{R}^d$ is a bounded continuous function defined as
\[ \widehat{\mu}(y) := \int_{\mathbb{R}^d} e^{-2\pi i x\cdot y} \,\textup{d}\mu(x). \]
When $\textup{d}\mu(x)=\chi(x)\textup{d}x$ for some $\chi\in\textup{L}^1(\mathbb{R}^d)$, this definition specializes to
\[ \widehat{\chi}(y) = \int_{\mathbb{R}^d} e^{-2\pi i x\cdot y} \chi(x) \,\textup{d}x. \]
Inverse Fourier transforms of $\mu$ and $\chi$ are simply given by $\widecheck{\mu}(x)=\widehat{\mu}(-x)$ and $\widecheck{\chi}(x)=\widehat{\chi}(-x)$.
Let us also recall that, for $q\in[1,\infty]$, the convolution of a function $g\in\textup{L}^q(\mathbb{R}^d)$ with a complex Borel measure $\mu$ is defined as
\[ (g\ast\mu)(x) := \int_{\mathbb{R}^d} g(x-y) \,\textup{d}\mu(y) \]
and that this function belongs to $\textup{L}^q(\mathbb{R}^d)$ again. For any $f\in\textup{L}^p(\mathbb{R}^d)$ with $p\in[1,2]$ basic properties of the Fourier transform give the formula $\widehat{f}\ast\mu = \widehat{f\widecheck{\mu}}$ and the same is true when $\mu$ is replaced with a function $\chi\in\textup{L}^1(\mathbb{R}^d)$.
Also recall the notation $\mu_t$ and $\chi_t$ explained in the introduction and note that
\[ \widehat{\mu_t}(x) = \widehat{\mu}(tx),\quad \widehat{\chi_t}(x) = \widehat{\chi}(tx). \]

Throughout the rest of the paper we fix the set $S$ and the measure $\sigma$.
We assume that $p\in[1,2]$ and $q\in(p,\infty)$ are such that the estimate \eqref{eq:restr} holds for all functions $f\in\mathcal{S}(\mathbb{R}^d)$ with a finite constant $C_{\textup{restr}}$.
All results will be stated and proved conditionally on that assumption.
We also fix $\varrho\in(p,\infty)$.

\begin{remark}\label{rem:extend}
A standard approximation argument immediately extends \eqref{eq:restr} to functions $f\in\textup{L}^1(\mathbb{R}^d)\cap\textup{L}^p(\mathbb{R}^d)$.
Indeed, for any such function $f$ we can find a sequence $(f_n)_{n=1}^{\infty}$ of Schwartz functions that converges to $f$, both in the $\textup{L}^1$-norm and in the $\textup{L}^p$-norm.
Then $(\widehat{f_n})_{n=1}^{\infty}$ converges pointwise to $\widehat{f}$, so Fatou's lemma combined with \eqref{eq:restr} gives
\[ \big\|\widehat{f}\,\big\|_{\textup{L}^q(S,\sigma)} \leq \liminf_{n\to\infty}\big\|\widehat{f_n}\,\big\|_{\textup{L}^q(S,\sigma)}
\leq C_{\textup{restr}} \lim_{n\to\infty}\|f_n\|_{\textup{L}^p(\mathbb{R}^d)} = C_{\textup{restr}} \|f\|_{\textup{L}^p(\mathbb{R}^d)}. \]
\end{remark}

\section{Variationally truncated Fourier transform}
The following lemma is the main ingredient in the proof of Theorems~\ref{thm:theorem1} and \ref{thm:theorem2}.

\begin{lemma}\label{lm:cklemma}
Suppose that $(\Psi_k)_{k\in\mathbb{Z}}$ are functions in $\mathcal{S}(\mathbb{R}^d)$ with mutually disjoint supports.
\begin{itemize}
\item[(a)] For any $M,N\in\mathbb{Z}$ such that $M\leq N$ and for each function $f\in\mathcal{S}(\mathbb{R}^d)$ we have
\begin{equation}\label{eq:induct1}
\Bigg\| \max_{\substack{n\in\mathbb{Z}\\ M\leq n\leq N}} \Big| \sum_{k=M}^{n} \widehat{f \Psi_k} \Big| \Bigg\|_{\textup{L}^q(S,\sigma)}
\lesssim_{p,q} C_{\textup{restr}} \bigg\| |f| \sum_{k=M}^{N} |\Psi_k| \bigg\|_{\textup{L}^p(\mathbb{R}^d)}.
\end{equation}
\item[(b)] For any $M,N\in\mathbb{Z}$ such that $M<N$ and for each function $f\in\mathcal{S}(\mathbb{R}^d)$ we have
\begin{equation}\label{eq:induct2}
\Bigg\| \sup_{\substack{m\in\mathbb{N}\\ k_0,\ldots,k_m\in\mathbb{Z} \\ M\leq k_0<\cdots<k_m\leq N}} \bigg( \sum_{j=1}^{m} \Big| \sum_{k=k_{j-1}+1}^{k_j} \widehat{f \Psi_k} \Big|^{\varrho} \bigg)^{1/\varrho} \Bigg\|_{\textup{L}^q(S,\sigma)}
\lesssim_{p,q,\varrho} C_{\textup{restr}} \bigg\| |f| \sum_{k=M+1}^{N} |\Psi_k| \bigg\|_{\textup{L}^p(\mathbb{R}^d)}.
\end{equation}
\end{itemize}
\end{lemma}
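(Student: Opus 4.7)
The strategy is to deduce both parts of the lemma from the abstract Christ--Kiselev maximal principle \cite{CK01:max} (for (a)) and its variational refinement due to Oberlin, Seeger, Tao, Thiele, and Wright \cite{OSTTW12:varcar} (for (b)), using as the underlying bounded operator the Fourier transform $T\colon g\mapsto\widehat{g}$ itself, which by \eqref{eq:restr} together with Remark~\ref{rem:extend} maps $\textup{L}^1(\mathbb{R}^d)\cap\textup{L}^p(\mathbb{R}^d)\to\textup{L}^q(S,\sigma)$ with norm at most $C_{\textup{restr}}$. The strict inequalities $p<q$ and $p<\varrho$ are precisely what those two principles consume in order to upgrade a single $\textup{L}^p\to\textup{L}^q$ estimate to, respectively, a maximal and a $\varrho$-variational estimate along a nested family of truncations.

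The bridge from the lemma to the abstract principles is provided by the disjoint-support hypothesis, which lets us rewrite truncated sums as applications of $T$ to a single truncated function. Setting $F:=f\sum_{k=M}^{N}\Psi_k$ and $E_n:=\bigcup_{k=M}^{n}\operatorname{supp}\Psi_k$, disjointness yields $F\mathbbm{1}_{E_n}=\sum_{k=M}^{n}f\Psi_k$ and therefore
\[
\sum_{k=M}^{n}\widehat{f\Psi_k}=T(F\mathbbm{1}_{E_n}),\qquad \|F\|_{\textup{L}^p(\mathbb{R}^d)}=\Bigl\||f|\sum_{k=M}^{N}|\Psi_k|\Bigr\|_{\textup{L}^p(\mathbb{R}^d)}.
\]
The left-hand side of \eqref{eq:induct1} is then the maximal function of $(T(F\mathbbm{1}_{E_n}))_{n=M}^{N}$ along the nested family $(E_n)$, and the left-hand side of \eqref{eq:induct2} is the corresponding $\varrho$-variation, so that invoking the two abstract principles in these settings gives the claimed bounds.

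To make this rigorous I would then unroll the Christ--Kiselev dyadic induction and verify that it goes through in the present setup. One picks an integer midpoint $m\in[M,N]$ along the index range so that the two pieces $F\mathbbm{1}_{E_m}$ and $F-F\mathbbm{1}_{E_m}$ each carry roughly half of the $\textup{L}^p$-mass of $F$, partitions $(S,\sigma)$ into the set where the argmax (respectively, the dominant contribution to the $\varrho$-variation) lies in $[M,m]$ and its complement, and observes that on each of the two level sets the quantity inside the $\textup{L}^q$-norm is controlled by a smaller instance of the same problem associated with only one of the two halves $(\Psi_k)_{k=M}^{m}$ or $(\Psi_k)_{k=m+1}^{N}$. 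These sub-collections inherit the disjoint-support hypothesis verbatim, so the inductive hypothesis applies unchanged; the strict inequalities $p<q$ and $p<\varrho$ then supply the geometric summation in the dyadic depth that closes the recursion, with the single direct application of \eqref{eq:restr} to $F$ serving as the boundary term at each split.

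The main obstacle I foresee is the one explicitly flagged in the paper's introduction: the subdivision must be performed along the discrete index $k$, rather than abstractly along the mass of $F$ as in some presentations of the Christ--Kiselev principle, so that the disjoint-support structure of $(\Psi_k)$ is literally preserved by the two subproblems at every recursive step. Once this bookkeeping is arranged, part~(a) can be handled either in parallel with part~(b) (replacing $\varrho$-variations with maxima throughout) or directly from the classical Christ--Kiselev lemma applied to $T$, using the bridge identity displayed above.
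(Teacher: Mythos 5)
Your overall strategy coincides with the paper's: Lemma~\ref{lm:cklemma} is proved there by exactly the Christ--Kiselev induction you describe, splitting the index range at the smallest $K$ for which $\big\||f|\sum_{M\le k\le K}|\Psi_k|\big\|_{\textup{L}^p}^p$ exceeds half of the total mass, applying \eqref{eq:restr} once to the truncation at the split, and extracting the geometric gain from $p<q$ (resp.\ $p<\varrho$); your sketch of part (a) (linearize by the first argmax, partition $S$ accordingly, recurse on the two halves, boundary term by one application of \eqref{eq:restr}) is faithful to the argument the paper takes from Tao's notes \cite{T06:lecnotes}. However, your account of the inductive step for part (b) would not close as stated, in two concrete respects. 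First, partitioning $(S,\sigma)$ according to where ``the dominant contribution to the $\varrho$-variation lies'' does not play the role that the argmax partition plays in (a): the two sub-variations $V(M,K-1)$ and $V(K+1,N)$ do not vanish off the respective pieces of $S$, so recombining the $\textup{L}^q$-norms over such a partition costs an extra factor $2^{1/q}$, and the resulting contraction factor $2^{1/q+1/\varrho-1/p}$ need not be smaller than $1$ (take, e.g., $p=1.2$, $\varrho=1.25$, $q=1.3$). The paper instead works pointwise, with no partition of $S$ in part (b): one uses
\[ V(M,N) \le \Big( V(M,K-1)^{\varrho} + V(K+1,N)^{\varrho} \Big)^{1/\varrho} + 2\sup_{M\le m\le K\le n\le N} \Big| \sum_{k=m+1}^{n} \widehat{f\Psi_k} \Big|, \]
reduces without loss of generality to $\varrho<q$ (lowering $\varrho$ only increases the left-hand side, and $p<q$ leaves room), and then the triangle inequality in $\textup{L}^{q/\varrho}$ gives $\big\|(V_1^{\varrho}+V_2^{\varrho})^{1/\varrho}\big\|_{\textup{L}^q}\le\big(\|V_1\|_{\textup{L}^q}^{\varrho}+\|V_2\|_{\textup{L}^q}^{\varrho}\big)^{1/\varrho}$, so the contraction factor is $2^{1/\varrho-1/p}<1$; this is where $p<\varrho$ is consumed, a mechanism absent from your sketch.

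Second, the crossing term in (b) is a maximal function in \emph{both} endpoints $m\le K\le n$, so it cannot be handled by ``a single direct application of \eqref{eq:restr}'': one writes $\sum_{k=m+1}^{n}=\sum_{k=M+1}^{n}-\sum_{k=M+1}^{m}$ and controls it by part (a) (this is the source of the factor $4A_{p,q}$ in the paper's constant $B_{p,q,\varrho}$). A single application of \eqref{eq:restr} suffices only in part (a), where the lower endpoint is fixed at $M$ and the crossing piece $\sum_{M\le k\le K}\widehat{f\Psi_k}$, multiplied by $\mathbbm{1}_{S_K\cup\cdots\cup S_{N'}}$, is one fixed function. Your fallback of invoking the abstract principles of \cite{CK01:max} and \cite{OSTTW12:varcar} as black boxes, via the bridge identity $F\mathbbm{1}_{E_n}=\sum_{k\le n}f\Psi_k$, is reasonable in spirit (the paper itself includes the proof only ``for completeness''), but since you offer the unrolled induction as the rigorous justification, these two defects are genuine gaps: the variational step needs the $\textup{L}^q(\ell^{\varrho})$ splitting with the WLOG reduction $\varrho<q$ and the maximal bound from part (a), not an $S$-partition and a single restriction estimate.
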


Such results have become standard in the literature since Christ and Kiselev introduced their general technique in \cite{CK01:max}.
We include a proof of Lemma~\ref{lm:cklemma} for completeness.
Strictly speaking, only part (b) will be needed in the following sections, but part (a) is used in the proof of part (b).

\begin{proof}[Proof of Lemma~\ref{lm:cklemma}]
(a) We closely follow the simplified proof of the maximal Christ--Kiselev lemma by Tao~\cite{T06:lecnotes}.

Regard $M$ and $N$ as fixed. Let us linearize the maximum on the left hand side of \eqref{eq:induct1} by setting $S_n$, for each integer $M\leq n\leq N$, to be the set of all points $\xi\in S$ such that $\big| \sum_{k=M}^{n'} \big(\widehat{f \Psi_k}\big)(\xi) \big|$ attains its first maximum over all integers $M\leq n'\leq N$ precisely at $n'=n$. That way $S$ becomes the disjoint union of $S_M$, \ldots, $S_N$.

For any integers $M',N'\in\mathbb{Z}$ such that $M\leq M'\leq N'\leq N$ we prove the estimate
\begin{equation}\label{eq:induct3}
\Bigg\| \sum_{M'\leq k\leq n\leq N'} \mathbbm{1}_{S_n} \widehat{f \Psi_k} \Bigg\|_{\textup{L}^q(S,\sigma)}
\leq A_{p,q} C_{\textup{restr}} \bigg\| |f| \sum_{M'\leq k\leq N'} |\Psi_k| \bigg\|_{\textup{L}^p(\mathbb{R}^d)}
\end{equation}
with the constant given as
\[ A_{p,q} := \frac{1}{1-2^{1/q-1/p}} \]
by the mathematical induction on the difference $N'-M'\in\{0,1,\ldots,N-M\}$. Observe that the induction will terminate with $M'=M$, $N'=N$, when \eqref{eq:induct3} becomes \eqref{eq:induct1}.

The induction basis $N'-M'=0$ is trivial, since then \eqref{eq:induct3} is follows from \eqref{eq:restr} applied to the function $f\Psi_{N'}$:
\[ \big\| \mathbbm{1}_{S_{N'}} \widehat{f \Psi_{N'}} \big\|_{\textup{L}^q(S,\sigma)}
\leq \big\| \widehat{f \Psi_{N'}} \big\|_{\textup{L}^q(S,\sigma)}
\leq C_{\textup{restr}} \| f\Psi_{N'} \|_{\textup{L}^p(\mathbb{R}^d)}
\leq A_{p,q} C_{\textup{restr}} \| f\Psi_{N'} \|_{\textup{L}^p(\mathbb{R}^d)}. \]
For the induction step we take $M',N'$ with $N'-M'\geq 1$ and suppose that \eqref{eq:induct3} holds for all pairs of cutoff integers with difference strictly smaller than $N'-M'$.
Let $M'\leq K\leq N'$ be the smallest index such that
\[ \bigg\| |f| \sum_{M'\leq k\leq K} |\Psi_k| \bigg\|_{\textup{L}^p(\mathbb{R}^d)}^p > \frac{1}{2} \bigg\| |f| \sum_{M'\leq k\leq N'} |\Psi_k| \bigg\|_{\textup{L}^p(\mathbb{R}^d)}^p. \]
We apply the induction hypothesis to the functions $\Psi_k$ from the ranges $M'\leq k\leq K-1$ and $K+1\leq k\leq N'$ respectively, obtaining
\begin{align}
& \Bigg\| \sum_{M'\leq k\leq n\leq K-1} \mathbbm{1}_{S_n} \widehat{f \Psi_k} \Bigg\|_{\textup{L}^q(S,\sigma)}
\leq A_{p,q} C_{\textup{restr}} \bigg\| |f| \sum_{M'\leq k\leq K-1} |\Psi_k| \bigg\|_{\textup{L}^p(\mathbb{R}^d)}, \label{eq:indauxa1} \\
& \Bigg\| \sum_{K+1 \leq k\leq n\leq N'} \mathbbm{1}_{S_n} \widehat{f \Psi_k} \Bigg\|_{\textup{L}^q(S,\sigma)}
\leq A_{p,q} C_{\textup{restr}} \bigg\| |f| \sum_{K+1\leq k\leq N'} |\Psi_k| \bigg\|_{\textup{L}^p(\mathbb{R}^d)}. \label{eq:indauxa2}
\end{align}
The right hand sides of \eqref{eq:indauxa1} and \eqref{eq:indauxa2} are both at most
\[ 2^{-1/p} A_{p,q} C_{\textup{restr}} \bigg\| |f| \sum_{M'\leq k\leq N'} |\Psi_k| \bigg\|_{\textup{L}^p(\mathbb{R}^d)} \]
by our choice of $K$.
On the other hand, the a priori estimate \eqref{eq:restr} applied to the function $g=f\sum_{M'\leq k\leq K}\Psi_k$ gives
\begin{align}
\Bigg\| \sum_{M'\leq k\leq K\leq n\leq N'} \mathbbm{1}_{S_n} \widehat{f \Psi_k} \Bigg\|_{\textup{L}^q(S,\sigma)}
& = \big\| \mathbbm{1}_{S_K\cup\cdots\cup S_{N'}} \widehat{g} \big\|_{\textup{L}^q(S,\sigma)} \nonumber \\
& \leq C_{\textup{restr}} \bigg\| |f| \sum_{M'\leq k\leq N'} |\Psi_k| \bigg\|_{\textup{L}^p(\mathbb{R}^d)}. \label{eq:indauxa3}
\end{align}
Since the functions appearing on the left hand sides of \eqref{eq:indauxa1} and \eqref{eq:indauxa2} have disjoint supports, combining \eqref{eq:indauxa1}--\eqref{eq:indauxa3} we obtain exactly \eqref{eq:induct3} with the constant
\[ 2^{1/q-1/p} A_{p,q} + 1 = A_{p,q}. \]
This completes the induction step.

(b) We follow the proof of a similar variational inequality by Oberlin, Seeger, Tao, Thiele, and Wright \cite{OSTTW12:varcar}; see Section B in the appendix part of their paper.

We can assume $\varrho<q$, since decreasing $\varrho$ only increases the left hand side of \eqref{eq:induct2}.
Let us prove \eqref{eq:induct2} by the mathematical induction on the difference $N-M\in\mathbb{N}$, with the explicit constant given by
\[ B_{p,q,\varrho} := \frac{4}{(1-2^{1/q-1/p})(1-2^{1/\varrho-1/p})}. \]
The induction basis $N-M=1$ is again a trivial consequence of \eqref{eq:restr} applied to the function $f\Psi_{N}$, so we turn to the inductive step.
Similarly as in part (a) let $M+1\leq K\leq N$ be the smallest index such that
\[ \bigg\| |f| \sum_{M+1\leq k\leq K} |\Psi_k| \bigg\|_{\textup{L}^p(\mathbb{R}^d)}^p > \frac{1}{2} \bigg\| |f| \sum_{M+1\leq k\leq N} |\Psi_k| \bigg\|_{\textup{L}^p(\mathbb{R}^d)}^p. \]
Denote the supremum on the left hand side of \eqref{eq:induct2} by $V(M,N)$ if $M<N$ and interpret $V(M,N)=0$ otherwise. Observe
\[ V(M,N) \leq \Big( V(M,K-1)^\varrho + V(K+1,N)^\varrho \Big)^{1/\varrho} + 2\sup_{\substack{m,n\in\mathbb{Z}\\ M\leq m\leq K\leq n\leq N}} \Big| \sum_{k=m+1}^{n} \widehat{f \Psi_k} \Big|. \]
Taking the $\textup{L}^q(S,\sigma)$-norm and using our assumption $\varrho<q$ yields
\begin{align*}
\|V(M,N)\|_{\textup{L}^q(S,\sigma)} & \leq \Big( \|V(M,K-1)\|_{\textup{L}^q(S,\sigma)}^\varrho + \|V(K+1,N)\|_{\textup{L}^q(S,\sigma)}^\varrho \Big)^{1/\varrho} \\
& \quad + 4 \Bigg\| \sup_{\substack{n\in\mathbb{Z}\\ M+1\leq n\leq N}} \Big| \sum_{k=M+1}^{n} \widehat{f \Psi_k} \Big| \Bigg\|_{\textup{L}^q(S,\sigma)}.
\end{align*}
Using the induction hypothesis for $V(M,K-1)$ and $V(K+1,N)$ and applying part (a) to the last term, we obtain \eqref{eq:induct2} with the same constant $B_{p,q,\varrho}$, because it was chosen so that
\[ 2^{1/\varrho-1/p} B_{p,q,\varrho} + 4A_{p,q} = B_{p,q,\varrho}. \qedhere \]
\end{proof}

\section{Long variations}
For any $I\subseteq(0,\infty)$, a countable set or an interval, for $g,\psi\in\mathcal{S}(\mathbb{R}^d)$, and for $\xi\in\mathbb{R}^d$ we denote
\[ \|g\|_{\textup{V}^{\varrho}(I,\psi)}(\xi) := \sup_{\substack{m\in\mathbb{N}\\ t_0,t_1,\ldots,t_m\in I\\ t_0<t_1<\cdots<t_m}} \bigg( \sum_{j=1}^{m} \Big| \int_{t_{j-1}}^{t_j} (g\ast\psi_t)(\xi) \,\frac{\textup{d}t}{t} \Big|^\varrho \bigg)^{1/\varrho}. \]
Note that continuity of the above convolutions in $t$ guarantees measurability of $\|g\|_{\textup{V}^{\varrho}(I,\psi)}$ in the case when $I$ is an interval.
It will be convenient to work with functions $\psi\in\mathcal{S}(\mathbb{R}^d)$ such that $\widehat{\psi}$ is supported in the annulus
\begin{equation}\label{eq:annulus}
\big\{ x\in\mathbb{R}^d :  1\leq |x|\leq 2^{1/2} \big\}.
\end{equation}

In this section we establish a particular case of Theorem~\ref{thm:theorem2} when the variation is taken over integer powers of $2$ only, and the averaging is performed using a special, carefully chosen cutoff function.
Let us write $2^\mathbb{Z}$ for $\{2^k:k\in\mathbb{Z}\}$.

\begin{lemma}\label{lm:long}
For any $\psi\in\mathcal{S}(\mathbb{R}^d)$ such that $\widehat{\psi}$ is supported in the annulus \eqref{eq:annulus} and any $f\in\mathcal{S}(\mathbb{R}^d)$ we have
\[ \Big\| \big\|\widehat{f}\,\big\|_{\textup{V}^{\varrho}(2^\mathbb{Z},\psi)} \Big\|_{\textup{L}^q(S,\sigma)}
\lesssim_{p,q,\varrho} C_{\textup{restr}} \big\|\widehat{\psi}\,\big\|_{\textup{L}^{\infty}(\mathbb{R}^d)} \|f\|_{\textup{L}^p(\mathbb{R}^d)}. \]
\end{lemma}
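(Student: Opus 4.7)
The strategy is to rewrite each integral $\int_{t_{j-1}}^{t_j}(\widehat{f}\ast\psi_t)(\xi)\,\tfrac{\textup{d}t}{t}$ as a partial sum of Fourier transforms of $f$ against dyadic ``blocks'' and then invoke the variational Christ--Kiselev estimate of Lemma~\ref{lm:cklemma}(b). Concretely, I would set
\[
\Psi_k(x):=\int_{2^{k-1}}^{2^k}\widecheck{\psi_t}(x)\,\frac{\textup{d}t}{t}=\int_{2^{k-1}}^{2^k}\widehat{\psi}(-tx)\,\frac{\textup{d}t}{t},\qquad k\in\mathbb{Z}.
\]
The identity $\widehat{f}\ast\psi_t=\widehat{f\widecheck{\psi_t}}$ and linearity of the Fourier transform then give, for any integers $k_0<\cdots<k_m$ and $t_j:=2^{k_j}$, the clean decomposition $\int_{t_{j-1}}^{t_j}(\widehat{f}\ast\psi_t)(\xi)\,\tfrac{\textup{d}t}{t}=\sum_{k=k_{j-1}+1}^{k_j}\widehat{f\Psi_k}(\xi)$. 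I would first truncate the supremum defining $\|\widehat{f}\|_{\textup{V}^\varrho(2^{\mathbb{Z}},\psi)}$ to sequences in $\{M,\ldots,N\}$, and at the end let $M\to-\infty$, $N\to\infty$ by monotone convergence.

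Next I would exploit the annular support of $\widehat{\psi}$: since $\widehat{\psi}(-tx)\ne 0$ forces $t|x|\in[1,2^{1/2}]$, the function $\Psi_k$ is supported in the annulus $\{2^{-k}\le|x|\le 2^{3/2-k}\}$. Because $2^{3/2-(k+2)}<2^{-k}$, the functions $\Psi_k$ and $\Psi_{k'}$ are disjointly supported whenever $k\equiv k'\pmod 2$ and $k\ne k'$. Using the dyadic partition $(0,\infty)=\bigsqcup_{k\in\mathbb{Z}}[2^{k-1},2^k]$ together with $|\widehat{\psi}|\le\|\widehat{\psi}\|_{\textup{L}^\infty}$, I would also record the uniform pointwise bound
\[
\sum_{k\in\mathbb{Z}}|\Psi_k(x)|\le\|\widehat{\psi}\|_{\textup{L}^\infty}\int_{1/|x|}^{2^{1/2}/|x|}\frac{\textup{d}t}{t}=\tfrac{1}{2}(\log 2)\,\|\widehat{\psi}\|_{\textup{L}^\infty}.
\]

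Since Lemma~\ref{lm:cklemma}(b) demands mutually disjoint supports but consecutive $\Psi_k$ are not disjoint, I would split each inner sum $\sum_{k=k_{j-1}+1}^{k_j}\widehat{f\Psi_k}=T_j^{\mathrm e}+T_j^{\mathrm o}$ according to the parity of $k$ and apply Minkowski's inequality in $\ell^\varrho$, reducing matters to the $\varrho$-variations of the two parity pieces. Setting $\ell_j:=\lfloor k_j/2\rfloor$ one checks that $T_j^{\mathrm e}=\sum_{\ell=\ell_{j-1}+1}^{\ell_j}\widehat{f\Psi_{2\ell}}$; discarding the indices $j$ with $\ell_{j-1}=\ell_j$ (which contribute $0$) leaves a strictly increasing subsequence that fits the exact format of Lemma~\ref{lm:cklemma}(b) applied to $\Phi_\ell:=\Psi_{2\ell}$, whose supports are pairwise disjoint by the previous paragraph. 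Lemma~\ref{lm:cklemma}(b) then bounds the $\textup{L}^q(S,\sigma)$-norm of the even piece by $\lesssim_{p,q,\varrho}C_{\textup{restr}}\|f\sum_\ell|\Phi_\ell|\|_{\textup{L}^p}\lesssim_{p,q,\varrho}C_{\textup{restr}}\|\widehat{\psi}\|_{\textup{L}^\infty}\|f\|_{\textup{L}^p(\mathbb{R}^d)}$, and the odd piece is handled symmetrically. The main technical hurdle is precisely this parity-split bookkeeping; it succeeds because the annulus width $2^{1/2}$ is strictly smaller than the dyadic ratio $2$, so that alternate $\Psi_k$'s really have disjoint supports.
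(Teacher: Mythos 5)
Your proposal is correct and takes essentially the same approach as the paper: reduce the dyadic variation to Lemma~\ref{lm:cklemma}(b) through block functions $\Psi_k$ whose multiplier action represents $\int(\widehat{f}\ast\psi_t)(\xi)\,\textup{d}t/t$ over dyadic $t$-intervals, split into two families to secure disjoint supports, apply the lemma twice, and finish by letting $M\to-\infty$, $N\to\infty$. The only (harmless) difference is the splitting device: you keep the full blocks $[2^{k-1},2^k]$ and separate even and odd $k$ via Minkowski in $\ell^\varrho$ and the reindexing $\ell_j=\lfloor k_j/2\rfloor$, whereas the paper splits each $t$-interval at $2^{k-1/2}$ into two halves, so that each of its two families of $\Psi_k$'s is already disjointly supported at consecutive scales.
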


\begin{proof}
Observe that
\[ \Psi_{k}(x) := \int_{2^{k-1}}^{2^{k-1/2}} \widecheck{\psi}(tx) \frac{\textup{d}t}{t} \]
is a Schwartz function supported in $2^{-k+1/2}\leq |x|\leq 2^{-k+3/2}$, so Lemma~\ref{lm:cklemma} applies to the sequence $(\Psi_k)_{k\in\mathbb{Z}}$.
Moreover,
\[ \big( \widehat{f \Psi_{k}} \big) (\xi)
= \int_{2^{k-1}}^{2^{k-1/2}} \big(\widehat{f\widecheck{\psi_{t}}}\big)(\xi) \frac{\textup{d}t}{t}
= \int_{2^{k-1}}^{2^{k-1/2}} \big(\widehat{f}\ast\psi_{t}\big)(\xi) \frac{\textup{d}t}{t}. \]
Also note that
\[ \|\Psi_k\|_{\textup{L}^{\infty}(\mathbb{R}^d)} \lesssim \big\|\widehat{\psi}\big\|_{\textup{L}^{\infty}(\mathbb{R}^d)}, \]
which, by disjointness of the supports of $\Psi_k$, clearly implies
\[ \Big\| \sum_{k=M+1}^{N}|\Psi_k| \Big\|_{\textup{L}^{\infty}(\mathbb{R}^d)} \lesssim \big\|\widehat{\psi}\big\|_{\textup{L}^{\infty}(\mathbb{R}^d)} \]
for any two integers $M$ and $N$ such that $M<N$.
Similarly, Lemma~\ref{lm:cklemma} can also be used with
\[ \Psi_{k}(x) := \int_{2^{k-1/2}}^{2^{k}} \widecheck{\psi}(tx) \frac{\textup{d}t}{t}, \]
since then the support of $\Psi_k$ lies in $2^{-k}\leq |x|\leq 2^{-k+1}$. This time we have
\[ \big( \widehat{f \Psi_{k}} \big) (\xi) = \int_{2^{k-1/2}}^{2^{k}} \big(\widehat{f}\ast\psi_{t}\big)(\xi) \frac{\textup{d}t}{t}. \]
Adding the two estimates obtained from \eqref{eq:induct2} gives
\[ \Bigg\| \sup_{\substack{m\in\mathbb{N}\\ k_0,\ldots,k_m\in\mathbb{Z} \\ M\leq k_0<\cdots<k_m\leq N}} \bigg( \sum_{j=1}^{m} \Big| \int_{2^{k_{j-1}}}^{2^{k_j}} \big(\widehat{f}\ast\psi_t\big) \frac{\textup{d}t}{t} \Big|^{\varrho} \bigg)^{1/\varrho} \Bigg\|_{\textup{L}^q(S,\sigma)}\lesssim_{p,q,\varrho} C_{\textup{restr}} \big\|\widehat{\psi}\,\big\|_{\textup{L}^{\infty}(\mathbb{R}^d)} \|f\|_{\textup{L}^p(\mathbb{R}^d)}, \]
so letting $M\to-\infty$ and $N\to\infty$ finishes the proof.
\end{proof}

\section{Short variations}
\begin{lemma}\label{lm:short}
For any $\psi\in\mathcal{S}(\mathbb{R}^d)$ such that $\widehat{\psi}$ is supported in the annulus \eqref{eq:annulus} and any $f\in\mathcal{S}(\mathbb{R}^d)$ we have
\[ \bigg\| \Big(\sum_{k\in\mathbb{Z}} \big\|\widehat{f}\,\big\|_{\textup{V}^{\varrho}([2^k,2^{k+1}],\psi)}^\varrho \Big)^{1/\varrho} \bigg\|_{\textup{L}^q(S,\sigma)}
\lesssim C_{\textup{restr}} \big\|\widehat{\psi}\,\big\|_{\textup{L}^{\infty}(\mathbb{R}^d)} \|f\|_{\textup{L}^p(\mathbb{R}^d)}. \]
\end{lemma}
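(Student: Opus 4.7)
The pivotal observation is that $\widecheck{\psi}(y)=\widehat{\psi}(-y)$ has its support in the same (symmetric) annulus $\{1\le|y|\le 2^{1/2}\}$ as $\widehat{\psi}$; consequently each rescaling $\widecheck{\psi_t}(x)=\widecheck{\psi}(tx)$ is compactly supported in the thin annulus $A_t:=\{1/t\le|x|\le 2^{1/2}/t\}$, while still satisfying the uniform bound $\|\widecheck{\psi_t}\|_{\textup{L}^\infty}=\|\widehat{\psi}\|_{\textup{L}^\infty}$. This will give spatial localization of the product $f\widecheck{\psi_t}$, whose Fourier transform is exactly $\widehat{f}\ast\psi_t$, and plug directly into the a priori restriction estimate.

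My plan is first to discard the variational structure by the trivial $\ell^\varrho\le\ell^1$ bound combined with the triangle inequality for integrals, which yields
$$\|\widehat{f}\|_{\textup{V}^\varrho([2^k,2^{k+1}],\psi)}(\xi)\le\int_{2^k}^{2^{k+1}}\big|(\widehat{f}\ast\psi_t)(\xi)\big|\,\frac{\textup{d}t}{t}.$$
Raising this to the $\varrho$-th power via Jensen's inequality on the probability measure $(\log 2)^{-1}\textup{d}t/t$ restricted to $[2^k,2^{k+1}]$ and summing in $k$ then produces the pointwise bound
$$\sum_{k\in\mathbb{Z}}\|\widehat{f}\|_{\textup{V}^\varrho([2^k,2^{k+1}],\psi)}^\varrho(\xi)\lesssim\int_0^\infty\big|(\widehat{f}\ast\psi_t)(\xi)\big|^\varrho\,\frac{\textup{d}t}{t}.$$
Both $\|\cdot\|_{\textup{V}^\varrho}$ and the $\ell^\varrho$-norm of a fixed nonnegative sequence decrease in $\varrho$, so I may assume $\varrho\le q$ (otherwise prove the lemma for some $\varrho_0\in(p,q)$ and invoke monotonicity). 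Under this assumption, Minkowski's integral inequality permits pulling the $\textup{L}^q(S,\sigma)$-norm inside $(\int\cdots \textup{d}t/t)^{1/\varrho}$.

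At this point I would apply \eqref{eq:restr} to the Schwartz function $f\widecheck{\psi_t}$, and combine with the support observation to get $\|\widehat{f}\ast\psi_t\|_{\textup{L}^q(S,\sigma)}\le C_{\textup{restr}}\|\widehat{\psi}\|_{\textup{L}^\infty}\|f\mathbbm{1}_{A_t}\|_{\textup{L}^p}$. A Fubini--Tonelli calculation then delivers
$$\int_0^\infty\|f\mathbbm{1}_{A_t}\|_{\textup{L}^p}^{\,p}\,\frac{\textup{d}t}{t}=\int_{\mathbb{R}^d}|f(x)|^p\int_{1/|x|}^{2^{1/2}/|x|}\frac{\textup{d}t}{t}\,\textup{d}x=\tfrac12(\log 2)\,\|f\|_{\textup{L}^p}^{\,p},$$
and the crude pointwise estimate $\|f\mathbbm{1}_{A_t}\|_{\textup{L}^p}\le\|f\|_{\textup{L}^p}$ (available since $\varrho\ge p$) trades the $p$-th power for a $\varrho$-th power with the same universal constant. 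Assembling the chain gives the claim with a constant bounded by $\log 2$.

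The only genuine idea is the annular support transfer from $\widehat{\psi}$ to $\widecheck{\psi}$; once $\widecheck{\psi_t}$ is recognized as a compactly supported spatial multiplier, everything afterwards is a routine Jensen--Minkowski--restriction--Fubini chain. The small piece of bookkeeping worth flagging is the monotonicity reduction to $\varrho\le q$, without which Minkowski's integral inequality would point the wrong way.
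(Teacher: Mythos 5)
Your argument is correct and follows essentially the same route as the paper: dominate the short variations pointwise by the square function $\big(\int_0^\infty|\widehat{f}\ast\psi_t|^{r}\,\textup{d}t/t\big)^{1/r}$, interchange norms by Minkowski using the off-diagonal condition, apply \eqref{eq:restr} to $f\widecheck{\psi}(t\,\cdot\,)$ for each fixed $t$, and finish with Fubini and the annular support of $\widecheck{\psi}$. The only (cosmetic) differences are that the paper first lowers $\varrho$ all the way to $p$ and uses H\"{o}lder on each subinterval, whereas you lower it only below $q$ and use the $\ell^1$-embedding plus Jensen, repairing the $p$-versus-$\varrho$ mismatch at the end with $\|f\mathbbm{1}_{A_t}\|_{\textup{L}^p}\leq\|f\|_{\textup{L}^p}$.
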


\begin{proof}
Since the left hand side is decreasing in $\varrho$ and we have $\varrho>p$, it is enough to show the inequality with $\varrho$ replaced by $p$:
\[ \bigg\| \Big(\sum_{k\in\mathbb{Z}} \big\|\widehat{f}\,\big\|_{\textup{V}^{p}([2^k,2^{k+1}],\psi)}^p \Big)^{1/p} \bigg\|_{\textup{L}^q(S,\sigma)}
\lesssim C_{\textup{restr}} \big\|\widehat{\psi}\,\big\|_{\textup{L}^{\infty}(\mathbb{R}^d)} \|f\|_{\textup{L}^p(\mathbb{R}^d)}. \]
Take an arbitrary subdivision $2^k=t_0<t_1<\cdots<t_{m-1}<t_m=2^{k+1}$ of the interval $[2^k,2^{k+1}]$.
H\"{o}lder's inequality in $t$ gives
\[ \Big| \int_{t_{j-1}}^{t_j} \big(\widehat{f}\ast\psi_t\big)(\xi) \,\frac{\textup{d}t}{t} \Big|^p
\leq (2^k)^{p-1} \int_{t_{j-1}}^{t_j} \big|\big(\widehat{f}\ast\psi_t\big)(\xi)\big|^p \,\frac{\textup{d}t}{t^p}
\leq \int_{t_{j-1}}^{t_j} \big|\big(\widehat{f}\ast\psi_t\big)(\xi)\big|^p \,\frac{\textup{d}t}{t}, \]
where we used $t_j-t_{j-1}\leq 2^k$ and $t\geq t_{j-1}\geq 2^k$ respectively.
Summing in $1\leq j\leq m$ and taking supremum over all choices of the numbers $t_j$ we obtain the pointwise bound
\[ \big\|\widehat{f}\,\big\|_{\textup{V}^{p}([2^k,2^{k+1}],\psi)}^{p} \leq \int_{2^k}^{2^{k+1}} \big|\widehat{f}\ast\psi_t\big|^p \,\frac{\textup{d}t}{t}. \]
A further summation, this time in $k\in\mathbb{Z}$, gives
\[ \sum_{k\in\mathbb{Z}} \big\|\widehat{f}\,\big\|_{\textup{V}^{p}([2^k,2^{k+1}],\psi)}^p
\leq \int_{0}^{\infty} \big|\widehat{f}\ast\psi_t\big|^p \,\frac{\textup{d}t}{t}. \]

It suffices to establish a certain $\ell^p$-square function bound, namely
\begin{equation}\label{eq:squarefn}
\bigg\| \Big( \int_{0}^{\infty} \big|\widehat{f}\ast\psi_t\big|^p \,\frac{\textup{d}t}{t} \Big)^{1/p} \bigg\|_{\textup{L}^q(S,\sigma)}
\lesssim C_{\textup{restr}} \big\|\widehat{\psi}\,\big\|_{\textup{L}^{\infty}(\mathbb{R}^d)} \|f\|_{\textup{L}^p(\mathbb{R}^d)}.
\end{equation}
The assumption $p<q$ makes this quite easy. Indeed, we can interchange the $\textup{L}^q(S,\sigma)$ and $\textup{L}^{p}((0,\infty),\textup{d}t/t)$-norms, so that the left hand side of \eqref{eq:squarefn} is bounded by
\begin{equation}\label{eq:squarefn2}
\Big( \int_{0}^{\infty} \big\|\widehat{f}\ast\psi_t\big\|_{\textup{L}^q(S,\sigma)}^p \,\frac{\textup{d}t}{t} \Big)^{1/p}.
\end{equation}
Then we rewrite $\widehat{f}\ast\psi_t$ as $\widehat{f\widecheck{\psi}(t\,\cdot\,)}$ and apply \eqref{eq:restr} to the function $f\widecheck{\psi}(t\,\cdot\,)$ for each fixed $t$. This shows that \eqref{eq:squarefn2} is at most $C_{\textup{restr}}$ times
\begin{equation}\label{eq:squarefn3}
\Big( \int_{0}^{\infty} \big\|f\widecheck{\psi}(t\,\cdot\,)\big\|_{\textup{L}^p(\mathbb{R}^d)}^p \,\frac{\textup{d}t}{t} \Big)^{1/p}.
\end{equation}
Finally, expanding out the $\textup{L}^p(\mathbb{R}^d)$-norm, the $p$-th power of \eqref{eq:squarefn3} becomes
\[ \int_{\mathbb{R}^d} |f(x)|^p \Big( \int_{0}^{\infty} \big|\widecheck{\psi}(t x)\big|^p \,\frac{\textup{d}t}{t} \Big) \,\textup{d}x. \]
Note that for any nonzero $x\in\mathbb{R}^d$ we have
\[ \int_{0}^{\infty} \big|\widecheck{\psi}(t x)\big|^p \,\frac{\textup{d}t}{t}
\leq \int_{1/|x|\leq t\leq 2/|x|} \big\|\widehat{\psi}\,\big\|_{\textup{L}^{\infty}(\mathbb{R}^d)}^p \,\frac{\textup{d}t}{t} \lesssim \big\|\widehat{\psi}\,\big\|_{\textup{L}^{\infty}(\mathbb{R}^d)}^p. \qedhere \]
\end{proof}

\section{General averaging measures}
The argument that combines long and short variations is a standard one and we simply adapt it from \cite{JSW08:var}. In short, fix $m\in\mathbb{N}$ and any choice of numbers $t_0<t_1<\cdots<t_m$ from $(0,\infty)$. All integers $k$ such that $\{t_0,t_1,\ldots,t_m\}$ has a nonempty intersection with $[2^k,2^{k+1})$ are listed as
\[ k_1<k_2<\cdots<k_n. \]
Moreover, for each $i\in\{1,2,\ldots,n\}$ we list all elements of $\{t_0,t_1,\ldots,t_m\}$ that fall into $[2^{k_{i}},2^{k_{i}+1})$ as
\[ t^{(i)}_{1}<t^{(i)}_{2}<\cdots<t^{(i)}_{l_i}. \]
Then, writing $F(t):=(\widehat{f}\ast\psi_t)(\xi)$ for a fixed $\xi\in S$, we have
\begin{align*}
\sum_{j=0}^{m-1} \Big| \int_{t_{j}}^{t_{j+1}} F(t) \,\frac{\textup{d}t}{t} \Big|^\varrho
& \leq \sum_{i=1}^{n} \sum_{j=1}^{l_i-1} \Big| \int_{t^{(i)}_{j}}^{t^{(i)}_{j+1}} F(t) \,\frac{\textup{d}t}{t} \Big|^\varrho
+ 3^{\varrho-1} \sum_{i=1}^{n-1} \Big| \int_{2^{k_{i}}}^{t^{(i)}_{l_i}} F(t) \,\frac{\textup{d}t}{t} \Big|^\varrho \\
& \quad + 3^{\varrho-1} \sum_{i=1}^{n-1} \Big| \int_{2^{k_{i}}}^{2^{k_{i+1}}} F(t) \,\frac{\textup{d}t}{t} \Big|^\varrho
+ 3^{\varrho-1} \sum_{i=1}^{n-1} \Big| \int_{2^{k_{i+1}}}^{t^{(i+1)}_{1}} F(t) \,\frac{\textup{d}t}{t} \Big|^\varrho.
\end{align*}
This gives
\[ \big\|\widehat{f}\,\big\|_{\textup{V}^{\varrho}((0,\infty),\psi)}
\lesssim_\varrho \big\|\widehat{f}\,\big\|_{\textup{V}^{\varrho}(2^\mathbb{Z},\psi)}
+ \Big(\sum_{k\in\mathbb{Z}} \big\|\widehat{f}\,\big\|_{\textup{V}^{\varrho}([2^k,2^{k+1}],\psi)}^\varrho \Big)^{1/\varrho}. \]
Therefore, combining Lemmata~\ref{lm:long} and \ref{lm:short} we obtain
\begin{equation}\label{eq:combined}
\Big\| \big\|\widehat{f}\,\big\|_{\textup{V}^{\varrho}((0,\infty),\psi)} \Big\|_{\textup{L}^q(S,\sigma)}
\lesssim_{p,q,\varrho} C_{\textup{restr}} \big\|\widehat{\psi}\,\big\|_{\textup{L}^{\infty}(\mathbb{R}^d)} \|f\|_{\textup{L}^p(\mathbb{R}^d)}
\end{equation}
for every $\psi\in\mathcal{S}(\mathbb{R}^d)$ such that $\widehat{\psi}$ is supported in the annulus \eqref{eq:annulus} and every $f\in\mathcal{S}(\mathbb{R}^d)$.

Now we continue with a more general averaging measure. Suppose that $\mu$ is a complex measure on Borel subsets of $\mathbb{R}^d$ such that its Fourier transform $\widehat{\mu}$ is $\textup{C}^\infty$ and satisfies \eqref{eq:mudecay} for some $D\in[0,\infty)$ and $\eta>0$. Let us fix a radial $\textup{C}^\infty$ function $\varphi\colon\mathbb{R}^d\to[0,\infty)$ supported in the annulus \eqref{eq:annulus} and normalized so that
\begin{equation}\label{eq:phinormalization}
\int_{0}^{\infty} \varphi(sx) \frac{\textup{d}s}{s} =1
\end{equation}
for each nonzero $x\in\mathbb{R}^d$.
For each $s\in(0,\infty)$ define $\psi^{(s)}\in\mathcal{S}(\mathbb{R}^d)$ via its Fourier transform as
\[ \widehat{\psi^{(s)}}(x) = \varphi(x)\frac{x}{s}\cdot(\nabla\widehat{\mu})\Big(\frac{x}{s}\Big). \]
By \eqref{eq:mudecay} we have the decay
\begin{equation}\label{eq:phidecay}
\big\|\widehat{\psi^{(s)}}\big\|_{\textup{L}^{\infty}(\mathbb{R}^d)} \lesssim D \min\{s^\eta,s^{-1}\}.
\end{equation}
The fundamental theorem of calculus, normalization \eqref{eq:phinormalization}, decay \eqref{eq:phidecay}, and Fubini's theorem now give, for any $0<a<b$,
\begin{align*}
\widecheck{\mu}(bx) - \widecheck{\mu}(ax)
& = \int_{a}^{b} t \Big( \frac{\textup{d}}{\textup{d}t} \widecheck{\mu}(tx) \Big) \,\frac{\textup{d}t}{t}
= \int_{a}^{b} tx \cdot (\nabla\widecheck{\mu})(tx) \Big( \int_{0}^{\infty} \varphi(stx) \,\frac{\textup{d}s}{s} \Big) \,\frac{\textup{d}t}{t} \\
& = \int_{0}^{\infty} \int_{a}^{b} \widecheck{\psi^{(s)}}(stx) \,\frac{\textup{d}t}{t} \,\frac{\textup{d}s}{s}
= \int_{0}^{\infty} \int_{sa}^{sb} \widecheck{\psi^{(s)}}(tx) \,\frac{\textup{d}t}{t} \,\frac{\textup{d}s}{s}.
\end{align*}
Thus, for every $f\in\mathcal{S}(\mathbb{R}^d)$ we have
\[ f(x)\widecheck{\mu}(bx) - f(x)\widecheck{\mu}(ax) = \int_{0}^{\infty} \int_{sa}^{sb} f(x)\widecheck{\psi^{(s)}}(tx) \,\frac{\textup{d}t}{t} \,\frac{\textup{d}s}{s}. \]
Thanks to the absolute integrability coming from \eqref{eq:phidecay} we can take the Fourier transform of both sides and conclude
\begin{equation}\label{eq:decomposition}
\big(\widehat{f}\ast\mu_{b}\big)(\xi) - \big(\widehat{f}\ast\mu_{a}\big)(\xi)
= \int_{0}^{\infty} \int_{sa}^{sb} \big(\widehat{f}\ast\psi^{(s)}_{t}\big)(\xi) \,\frac{\textup{d}t}{t} \,\frac{\textup{d}s}{s}.
\end{equation}

Now we have all elements to deduce the two theorems formulated in the introductory section.

\begin{proof}[Proof of Theorem~\ref{thm:theorem2}]
From decomposition \eqref{eq:decomposition} interchanging the $\ell^\varrho$-norm and the integral in $s$ we get
\[ \Big( \sum_{j=1}^{m} \Big| \big(\widehat{f}\ast\mu_{t_{j}}\big)(\xi) - \big(\widehat{f}\ast\mu_{t_{j-1}}\big)(\xi) \Big|^{\varrho} \Big)^{1/\varrho}
\leq \int_{0}^{\infty} \Big( \sum_{j=1}^{m} \Big| \int_{st_{j-1}}^{st_{j}} \big(\widehat{f}\ast\psi^{(s)}_{t}\big)(\xi) \,\frac{\textup{d}t}{t} \Big|^{\varrho} \Big)^{1/\varrho} \,\frac{\textup{d}s}{s} \]
for any numbers $0<t_0<t_1<\cdots<t_m$.
By this and Minkowski's inequality we know that the left hand side of \eqref{eq:varrestr} is less than or equal to
\[ \int_{0}^{\infty} \Big\| \big\|\widehat{f}\,\big\|_{\textup{V}^{\varrho}((0,\infty),\psi^{(s)})} \Big\|_{\textup{L}^q(S,\sigma)} \,\frac{\textup{d}s}{s}, \]
which is by estimate \eqref{eq:combined} at most a constant depending on $p,q,\varrho$ times
\[  C_{\textup{restr}} \Big( \int_{0}^{\infty} \big\|\widehat{\psi^{(s)}}\big\|_{\textup{L}^{\infty}(\mathbb{R}^d)} \,\frac{\textup{d}s}{s} \Big) \|f\|_{\textup{L}^p(\mathbb{R}^d)}. \]
Finally, \eqref{eq:phidecay} bounds the last display by a constant multiple of
\[ C_{\textup{restr}} D \Big( \int_{0}^{\infty} \min\{s^{\eta-1},s^{-2}\} \,\textup{d}s \Big) \|f\|_{\textup{L}^p(\mathbb{R}^d)}
\lesssim_{\eta} C_{\textup{restr}} D \|f\|_{\textup{L}^p(\mathbb{R}^d)}. \qedhere \]
\end{proof}

\begin{proof}[Proof of Theorem~\ref{thm:theorem1}]
If in estimate \eqref{eq:varrestr} we take $\varrho=p+1$ and restrict the supremum to $m=1$ and $\{t_0,t_1\}=\{t,1\}$, we particularly get
\begin{equation}\label{eq:maxrestr2}
\Big\| \sup_{t\in(0,\infty)} \big| \widehat{f}\ast\mu_t - \widehat{f}\ast\mu \big| \Big\|_{\textup{L}^q(S,\sigma)}
\lesssim_{p,q,\eta} C_{\textup{restr}} D \|f\|_{\textup{L}^p(\mathbb{R}^d)}.
\end{equation}
In order to conclude \eqref{eq:maxrestr} it remains to write
\[ \Big\| \sup_{t\in(0,\infty)} \big| \widehat{f}\ast\mu_t \big| \Big\|_{\textup{L}^q(S,\sigma)}
\leq \big\| \widehat{f}\ast\mu \big\|_{\textup{L}^q(S,\sigma)}
+ \Big\| \sup_{t\in(0,\infty)} \big| \widehat{f}\ast\mu_t - \widehat{f}\ast\mu \big| \Big\|_{\textup{L}^q(S,\sigma)} \]
and observe that the two terms above are respectively bounded by \eqref{eq:restr} combined with Remark~\ref{rem:extend} and applied to $f\widecheck{\mu}$ and by \eqref{eq:maxrestr2}.
\end{proof}

\section*{Acknowledgments}
The author was supported in part by the Croatian Science Foundation under the project UIP-2017-05-4129 (MUNHANAP).
The author would like to thank the anonymous referee for the numerous useful comments and suggestions.


\end{document}